\documentclass{article}
\usepackage{amssymb}
\usepackage{amsmath}
\usepackage{amsthm}
\usepackage{verbatim}
\usepackage{color}
\usepackage{url}
 \usepackage[all]{xy}
\usepackage{fancyhdr}

\newtheorem{thm}{Theorem}

\newtheorem{lem}{Lemma}
\newtheorem{cor}{Corollary}

\newcommand{\sabs}[1]{\left|#1\right|}
\newcommand{\sparen}[1]{\left(#1\right)}
\newcommand{\norm}[1]{\sabs{\sabs{#1}}}

\bibliographystyle{plain}

\numberwithin{equation}{section}
\title{Stability Analysis in Magnetic Resonance Elastography II\\\vskip 0.8cm}
\author{ Heiko Gimperlein\thanks{Maxwell Institute for Mathematical Sciences and Department of Mathematics, Heriot--Watt University, Edinburgh, EH14 4AS, United Kingdom, email: h.gimperlein@hw.ac.uk}  \thanks{Institute for Mathematics, University of Paderborn, Warburger Str.~100, 33098 Paderborn, Germany} \and Alden Waters\thanks{Department of Mathematics, University College London, Gower Street, London, WC1E 6BT, United Kingdom, email: alden.waters@ucl.ac.uk }}
\date{}

\begin{document}

\maketitle

\begin{abstract}
We consider the inverse problem of finding unknown elastic parameters  from internal measurements of displacement fields for tissues. In the sequel to \cite{SAMRE}, we use pseudodifferential methods for the problem of recovering the shear modulus for Stokes systems from internal data. We prove stability estimates in $d=2,3$ with reduced regularity on the estimates and show that the presence of a finite dimensional kernel can be removed. This implies the convergence of the Landweber numerical iteration scheme. We also show that these hypotheses are natural for experimental use in constructing shear modulus distributions. 
\end{abstract}

\bigskip

\noindent {\footnotesize Mathematics Subject Classification
(MSC2000): 35B30.}

\noindent {\footnotesize Keywords: stability analysis, shear modulus reconstruction, magnetic resonance elastography, Landweber scheme, biological tissues, optimal control.}

\section{Introduction}

This article uses pseudodifferential methods to sharpen recent stability estimates for an inverse problem of Magnetic Resonance Elastography (MRE), with short proofs. The new proofs allow to analyse practical numerical reconstruction methods. 

In Magnetic Resonance Elastography, internal measurements of time-harmonic displacement fields offer the possibility of a highly resolved reconstruction of shear modulus distributions. It is motivated by the detection of cancerous anomalies in their early stages \cite{ammaribio}. See \cite{Bal, Bal2, Kuchment, Kuchment2, Montalto, Nachman, Nakamura} for relevant related works. 

The reduced regularity assumptions of the stability estimates in this article prove relevant for numerical reconstruction schemes. We show how they directly relate to classical results for overdetermined elliptic boundary problems and their analysis using pseudodifferential operators. The analysis is based on a Stokes system as in \cite{SAMRE}. 

Let $\Omega$ denote a simply-connected bounded domain in $\mathbb{R}^d$ where $d=2, 3$ with $\mathcal{C}^\infty$-boundary $\partial\Omega$. We consider the following boundary value problem for the elasticity equations
\begin{align}\label{model}
\left\{
\begin{array}{lr}
\nabla(\lambda(x)\nabla\cdot u_{\lambda})+\omega^2 u_{\lambda}(x)+2\nabla\cdot\mu(x)\nabla^s u_{\lambda}(x)=0 \,\quad \mathrm{in}\,\, \Omega ,\\ \\
u_{\lambda}(x)=F(x) \,\quad \mathrm{on} \,\quad \partial\Omega \ ,\\
\end{array}
\right.
\end{align}
where 
\begin{align*}
2\nabla^s u_{\lambda}=\nabla u_{\lambda}+(\nabla u_{\lambda})^t
\end{align*}
and $\nabla u_{\lambda}$ is the matrix $(\partial_ju_{\lambda, i})_{i,j=1}^d$, with $u_{\lambda, i}$ the $i$-th component of $u_\lambda$.
The Lam\'{e} coefficients
(respectively, the shear and the compressional modulus) $\lambda, \mu \in \mathcal{C}^1(\bar{\Omega})$ satisfy
\begin{align}
\label{assumpmin}
& \lambda\geq \lambda_{min}= \min\{\lambda(x): x\in \bar{\Omega}\} >0, \\
& \mu\geq \mu_{min}= \min\{\mu(x): x\in \bar{\Omega}\} >0.\label{assumpmin2}
\end{align}

If $F\in H^{1/2}(\partial\Omega)$ and (\ref{assumpmin}) and (\ref{assumpmin2}) are satisfied, there exists a unique solution $u_{\lambda} \in H^1(\Omega)^d$ to \eqref{model} even for $\lambda, \mu \in L^\infty(\Omega)$. In particular, $\nabla^su_{\lambda}\in L^2(\Omega)^d$.
Higher regularity $\nabla^su_{\lambda}\in H^4(\Omega)^d$ holds under the additional assumption that $\lambda, \mu \in \mathcal{C}^4(\bar{\Omega})$, $F\in H^{9/2}(\partial\Omega)^d$.\\

In \cite{SAMRE} it was shown that for $2\mu_{\max}<3\lambda_{\min}$ the solution $u_{\lambda}(x)$ is approximated in $H^1(\Omega)^d$ norm up to $\mathcal{O}(\lambda^{-1/2})$ by the solution to the Stokes problem
\begin{align}\label{stokesdata}
\left\{
\begin{array}{lr}
\omega^2 u(x)+2\nabla\cdot\mu(x)\nabla^s u(x)+\nabla p(x)=0 \,\quad \mathrm{in}\,\, \Omega ,\\ \\
\nabla\cdot u(x)=0 \,\quad \mathrm{in} \,\quad \Omega , \\\\
u(x)=F(x) \,\quad \mathrm{on} \,\quad \partial\Omega ,\\ \\
\int\limits_{\Omega}p(x)\,dx=0.
\end{array}
\right.
\end{align}

If we examine the solutions in 2 and 3 dimensions, we find we can reconstruct a displacement of the shear modulus $\mu$ in a stable way from the difference of the solutions. Hybrid modalities involve exciting the system with more than one wave or modality. In $3d$ it was already shown in \cite{SAMRE} using elliptic regularity theory that hybrid modalities are necessary for reconstruction of the shear modulus. This necessity is in contrast to dimension 2. The goal of this article is to show how pseudodifferential operators and the theory of elliptic boundary problems leads to simple proofs of stronger stability estimates for the inverse problem. The estimates reduce the necessary order of regularity and allow the analysis of practical numerical reconstruction methods. 

In Section \ref{fredsection} we introduce basic notions of elliptic boundary problems and the Shapiro-Lopatinskii ellipticity condition. A basic Fredholm theorem is recalled in Theorem \ref{fredholm2}. In sections \ref{stab2d} and \ref{stab3d} we apply Theorem \ref{fredholm2} to conclude basic stability estimates (Theorems \ref{conditional} and \ref{conditional2}) in the presence of a finite dimensional kernel. Refined stability estimates in $L^2$, based on stronger assumptions, are the content of Section 5. We also show that locally it is possible to remove the presence of the finite dimensional kernel under certain hypotheses on the system, Corollaries \ref{cor2d} and \ref{cor3d}. These results imply convergence of the numerical Landweber iteration scheme, which is discussed in Section \ref{secfinal}. We also show that the assumptions which are made on the symbols are natural for numerical experiments in Section \ref{example}. \\

\noindent \emph{Notation:} 
In this paper we use the Einstein summation convention. For two matrices $A$ and $B$, the inner product is denoted by
 \begin{align*}
A:B=a_{ij}b_{ji},
\end{align*}
and we write $|A|^2 = A:A$.
For vector--valued functions
\begin{align*}
f(x)=(f_1(x),f_2(x),\ldots,f_d(x)):\Omega\rightarrow \mathbb{R}^d\ ,
\end{align*}
the Hilbert space $H_{m}(\Omega)^d$, $m\in \mathbb{N}$ is defined as the completion of the space $\mathcal{C}_c^{\infty}(\Omega)^d$ with respect to the norm
\begin{align*}
\|f\|^2_m = \|f\|^2_{m,\Omega} = \sum\limits_{|i|=1}^m \int\limits_{\Omega}\sparen{|\nabla^i f(x)|^2+|f(x)|^2}\,dx,
\end{align*}
where we write $\nabla^i= \partial^{i_1} \ldots \partial^{i_d}$ for $i=(i_1,\ldots, i_d)$ for the higher-order derivative. 

The outward unit normal to the boundary $\partial\Omega$ is denoted by $n$.
If $\mu\in \mathcal{C}^1(\Omega)$, we define the conormal derivative
\begin{align*}
2\frac{\partial f}{\partial \nu}=\mu(x)\sparen{\nabla f+(\nabla f)^t}n\ .
\end{align*}

We say that a linear operator $T: C^{\infty}_c(\mathbb{R}^d) \to C^{\infty}(\mathbb{R}^d)$ is of order $\leq \alpha$ if 
\begin{align*}
\norm{Tu}_{m}\leq C\norm{u}_{m+\alpha}\ , \qquad u\in C^{\infty}(\mathbb{R}^d) \ ,
\end{align*}
for every $m\in \mathbb{R}$. We say $\mathrm{ord}(T) = \alpha$ if $T$ is of order $\leq \alpha$, but not of order $\leq \alpha'$ for $\alpha'<\alpha$.

\section{Reduction of the Regularity for Stability Estimates}\label{fredsection}

This section recalls some aspects of the elliptic theory for boundary problems in low--order Sobolev spaces. These will be applied to obtain regularity estimates for the inverse problem which are suitable for numerical calculations. 

We outline a direct approach to Fredholm estimates for a class of boundary problems relevant to the reconstruction problem. Our presentation will follow the treatment in Eskin \cite{eskin}, Ch.~52, to account for the low--order Sobolev spaces. \\

After straightening out the boundary, it is sufficient to consider the boundary problem in the half-space
\begin{align*}
\mathbb{R}_+^n=\{(x',x_n): x_n>0, \,\,x'\in \mathbb{R}^{n-1}\}.
\end{align*}

 We consider $A(x,D)$ as an $r\times r$ system of differential operators of order $m$, with principal part $A_0(x,D)$. We say than an operator $A(x,D)$ is elliptic if the determinant of the principal symbol $\mathrm{det}A_0(x,\xi)$ defines the symbol of a scalar elliptic operator of order $mr$. In other words, we have $\mathrm{det}A_0(x,\xi)\neq 0$ for all $\xi\neq 0$.  We consider the initial boundary value problem
\begin{align*}
&A(x,D)u=f\qquad \mathrm{in} \quad \Omega \\&
B_j(x',D)u|_{\partial\Omega}=g_j \qquad 1\leq j\leq m_{+}=m/2
\end{align*}
We let $B(x',D)$ be the $m_+\times r$ matrix with rows $B_j$. The system of boundary conditions can then be written in the following form
\begin{align*}
B(x',D)u|_{\partial\Omega}=g,
\end{align*}
with $g=(g_1, . . ,g_{m_+})$. We can associate a family $A_0(x',\xi'_{x'},0,D_n)$ and $B_0(x',\xi_{x'}',0,D_n)$ which are differential operators on $\mathbb{R}_+^1$ depending on $(x',\xi_{x'}')\in T_0^*(\partial\Omega)$. 

The Shapiro--Lopatinskii condition can be stated as the unique solvability of the differential equation in $L^2(\mathbb{R}_+)$. In order to verify this condition, we translate it into a non-vanishing condition on a determinant associated to the symbols of the differential operators.  

If $A_0(x,0,+1)=1$, we obtain a factorization $A_0(x,\xi',\xi_n)=A_+(x,\xi',\xi_n)A_-(x,\xi',\xi_n)$, with
\begin{align*}
A_\pm(x,\xi',\xi_n)=\prod_{j=1}^{m_+}(\xi_n-\sigma_j^\pm(x,\xi')).
\end{align*}

For any $N\geq s$ an integer, we construct operators with symbols $R_p^+(x,\xi',\xi_n)$ and $\deg R_p^+=-m_+-p$ for $|\xi'|>1$ and operators with symbols $A_p^-(x,\xi',\xi_n)$ with $\mathrm{deg}A_p^-=m_+-p$ with $|\xi'|>1$ and $0\leq p\leq N$ such that
\begin{align*}
A(x,D',D_n)\sparen{\sum\limits_{p=0}^NR_p^+(x,D^-,D_n)}=\sum\limits_{p=0}^NA_p^-(x,D'D_n)+T_{m_+-N-1}.
\end{align*}
The operators $T_{m_P+-N-1}$ are of lower order: 
\begin{align*}
\norm{T_{m_P+-N-1} u}_{0,s}\leq C \norm{u}_{m_+-1,s-N} \qquad \forall s.
\end{align*}
Because $A$ is a differential operator, the product
\begin{align*}
A(x,D)R_p^+(x,D',D_n)
\end{align*}
is of the following form
\begin{align*}
\sum\limits_{j=0}^mB_{jp}(x,D),
\end{align*}
where
\begin{align*}
&B_{0p}(x,\xi)=A_0(x,\xi)R_p^+(x,\xi',\xi_n), \\&
B_{jp}(x,\xi)=\sum\limits_{r=0}^{j-1}\sum\limits_{|k|=j-r}\frac{1}{k!}\frac{\partial^k A_r(x,\xi)}{\partial\xi^k}D_x^kR_p^+ +A_j(x,\xi)R_p^+, \quad j\geq 1, \\&
A(x,\xi)=A_0(x,\xi)+\sum\limits_{j=1}^mA_j(x,\xi) \qquad \mathrm{ord}A_j=m-j.
\end{align*}
If we take the product of $A$ and $\sum_{p=0}^NR_p^+$ and collect symbols of the same degree of homogeneity in $(\xi',\xi_n)$ with $|\xi'|>1$ we obtain
\begin{align*}
&A_0(x,\xi)R_0^+(x,\xi',\xi_n)=A_0^-(x,\xi',\xi_n),\\&
A_0(x,\xi)R_p^+(x,\xi',\xi_n)+T_p(x,\xi',\xi_n)=A_p^-(x,\xi',\xi_n), \qquad p\geq 1,
\end{align*}
with $T_p$ depending on $R_j^+$ for $0\leq j\leq p-1$, 
\begin{align*}
T_p=\sum\limits_{j+r=p}B_{jr},
\end{align*}
and $\mathrm{deg}T_p=m_+-p$ for $|\xi'|>1$. 

We now define the operator $\Pi'$ as 
\begin{align*}
\Pi'D(x',0,\xi',\xi_n)=\frac{1}{2\pi}\int\limits_{\gamma_+}D\,d\xi_n,
\end{align*}
with $\gamma_+$ being a contour which encloses the poles of $D$ in the upper half plane and let
\begin{align*}
b_{jk}(x',\xi')=\Pi' B_{jk}(x',0,\xi',\xi_n).
\end{align*}
Here $B_{jk}$ is the symbol of the composition of $B_j$ and $R^+\frac{\partial^{k-1}}{\partial x_n^{k-1}}$. 

The Shapiro--Lopatinskii condition in its algebraic form then says that 
\begin{align}\label{bc}
\det[b_{jk0}(x',\xi')]_{j,k=1}^{m_+}\neq 0 \qquad \forall (x',\xi'), \xi'\neq 0.
\end{align}
 Here $b_{jk0}(x',\xi')$ denotes the prinicipal part of $b_{jk}$.

 We let $A(x,D)$ be elliptic in $\overline{\Omega}$ and the Shapiro--Lopatinskii condition (\ref{bc}) be satisfied. We let 
\begin{align*}
s>\max\limits_{1\leq j\leq m_+}\sparen{m_j+1/2}, 
\end{align*}
with $m_j=\deg B_j$. Under these conditions, we then have the following theorem:
\begin{thm}\label{fredholm2}
The boundary value problem defines a Fredholm operator from $H_{s}(\Omega)$
to 
\begin{align*}
H_{s-m}\times\prod\limits_{j=1}^{m_+}H_{s-m_j-1/2}(\partial\Omega).
\end{align*}
There exists a constant $C$ such that
\begin{align*}
\norm{u}_{s, \Omega}\leq C\sparen{\norm{f}_{s-m, \Omega}+\sum\limits_{j=1}^m\norm{g_j}_{s-m_j-1/2, \partial\Omega}+\norm{u}_{s-1, \Omega}}.
\end{align*}
\end{thm}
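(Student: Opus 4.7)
The plan is to prove the theorem by constructing a two-sided parametrix modulo compact operators for the map
\[
\mathcal{A} = (A(x,D), B(x',D)|_{\partial\Omega}) : H_s(\Omega) \longrightarrow H_{s-m}(\Omega) \times \prod_{j=1}^{m_+} H_{s-m_j-1/2}(\partial\Omega),
\]
following the half-space approach just set up. First I would localize with a smooth partition of unity subordinate to a cover of $\overline{\Omega}$ by charts, straighten the boundary in charts meeting $\partial\Omega$, and freeze coefficients at a base point, so that it suffices to analyse a model problem on $\mathbb{R}^n_+$. The error between the frozen-coefficient model and the true operator is of lower order by standard symbol calculus, which will later be absorbed either into a compact remainder or into the $\|u\|_{s-1,\Omega}$ term on the right-hand side.

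Next I would assemble an interior parametrix from the operators $\sum_{p=0}^N R_p^+(x,D',D_n)$ already constructed in the excerpt, which inverts $A(x,D)$ up to a remainder $T_{m_+-N-1}$ whose mapping properties are controlled. To handle the boundary data, I would use the Shapiro--Lopatinskii condition \eqref{bc}: since $\det [b_{jk0}(x',\xi')]\neq 0$ for $\xi'\neq 0$, the matrix $[b_{jk}]$ is an elliptic symbol on $T^*(\partial\Omega)\setminus 0$ and can be inverted microlocally. Composing this boundary inverse with the $R_p^+$ family (in the manner indicated by the definition of $b_{jk}$ through $R^+\partial_n^{k-1}$) produces a Poisson-type operator that takes boundary data $g$ into $H_s(\Omega)$ and satisfies $B$ modulo smoothing. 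Adding the interior parametrix and this Poisson parametrix yields a right parametrix $\mathcal{P}$ with $\mathcal{A}\mathcal{P} = I + K_1$, where $K_1$ is of order $-1$ and hence compact by Rellich's theorem; a symmetric construction gives $\mathcal{P}'\mathcal{A} = I + K_2$ with $K_2$ compact on $H_s(\Omega)$. Fredholmness then follows from the standard abstract criterion.

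Finally, the a priori estimate drops out of applying $\mathcal{P}'$ to the pair $(f,g) = \mathcal{A}u$:
\[
\|u\|_{s,\Omega} \leq \|\mathcal{P}'\mathcal{A}u\|_{s,\Omega} + \|K_2 u\|_{s,\Omega} \leq C\left(\|f\|_{s-m,\Omega} + \sum_{j=1}^{m_+}\|g_j\|_{s-m_j-1/2,\partial\Omega} + \|u\|_{s-1,\Omega}\right),
\]
using that $K_2$ is of order $-1$ so $\|K_2 u\|_s \lesssim \|u\|_{s-1}$. The hypothesis $s > \max_j(m_j + 1/2)$ is exactly what guarantees that each trace $B_j u|_{\partial\Omega}$ lives in $H_{s-m_j-1/2}(\partial\Omega)$ by the trace theorem, so that the target space is well defined.

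The main obstacle, and the one that justifies following Eskin's presentation closely, is the bookkeeping for the boundary symbol calculus: one must carry out the factorization $A_0 = A_+ A_-$, verify that the contour integral $\Pi'$ applied to $B_{jk}$ yields symbols on $\partial\Omega$ of the correct order, and ensure that the parametrix constructions glue consistently across charts in the low-regularity range of $s$ allowed by the theorem. Once this symbolic calculus is in place, the Fredholm property and the estimate follow by a fairly mechanical assembly.
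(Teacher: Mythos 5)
Your outline is correct and is essentially the same argument as the one the paper relies on: Theorem \ref{fredholm2} is not proved in the paper but recalled from Eskin \cite{eskin}, Ch.~52 (cf.~also \cite{gerd}), and the machinery set up in Section \ref{fredsection} --- the factorization $A_0=A_+A_-$, the operators $R_p^+$, the projection $\Pi'$ and the symbols $b_{jk}$ --- is exactly the apparatus your sketch assembles into a two-sided parametrix modulo compact operators, yielding the Fredholm property and the a priori estimate with the $\norm{u}_{s-1,\Omega}$ remainder. No gaps to report.
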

A similar theorem can be found in the earlier work \cite{gerd}, but we use the formulation of \cite{eskin} for the ease of computation of the symbol classes. 

\section{Stability in Dimension 2}\label{stab2d}
We prove the following analogue of Theorem 3 in \cite{SAMRE}. 
\begin{thm}\label{conditional2}
Let $\Omega \subset \mathbb{R}^2$ a smooth and bounded domain, $(u_1,p_1)$, $(u_2,p_2)$ be two solutions to equation \eqref{stokesdata} with coefficients $\mu_1, \mu_2 \in \mathcal{C}^4(\bar{\Omega})$, respectively. Assume that $\mu_1=\mu_2$ on $\partial\Omega$ and
\begin{equation} \label{cond-2d}
 |\nabla^s u_1(x)| \neq 0, \quad x\in \bar{\Omega}.
\end{equation}
Let $\epsilon\in (0,1)$. Then there exists a finite dimensional subspace $K \subset L^2(\Omega)$ and a constant $C>0$, depending on $\|\mu_2\|_{\mathcal{C}^{5/2+\epsilon}(\bar{\Omega})}$, such that 
\begin{align}\label{mainest2}
\norm{\mu_1-\mu_2}_{1/2+\epsilon,\Omega}\leq C\norm{u_1-u_2}_{1/2+\epsilon,\Omega},
\end{align}
provided  $\mu_1-\mu_2 \perp K$.
\end{thm}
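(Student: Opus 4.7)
The plan is to linearize by subtracting the two copies of \eqref{stokesdata} and then to recast the resulting linear system as an elliptic boundary value problem to which Theorem~\ref{fredholm2} applies. Setting $v=u_1-u_2$, $q=p_1-p_2$, and $\nu=\mu_1-\mu_2$, and using the identity $\mu_1\nabla^s u_1-\mu_2\nabla^s u_2 = \nu\nabla^s u_1+\mu_2\nabla^s v$, one obtains
\[
\omega^2 v+2\nabla\cdot(\mu_2\nabla^s v)+2\nabla\cdot(\nu\,\nabla^s u_1)+\nabla q=0,\qquad \nabla\cdot v=0,
\]
supplemented by the homogeneous Dirichlet conditions $v|_{\partial\Omega}=0$ and $\nu|_{\partial\Omega}=0$, the latter coming from $\mu_1=\mu_2$ on $\partial\Omega$.

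I would regard this as a boundary value problem in which $(\nu,q)$ are the unknowns and $v$ supplies inhomogeneous data, endowed with the boundary condition $\nu|_{\partial\Omega}=0$ together with a pressure normalization such as $\int_\Omega q\,dx=0$. Since $\nabla\cdot u_1=0$ forces $\nabla^s u_1$ to be trace--free, the pointwise hypothesis \eqref{cond-2d} implies that $\nabla^s u_1(x)$ has two real orthogonal non--zero eigenvalues at every $x\in\bar\Omega$, which is what one needs to make the principal symbol of the first--order system on $(\nu,q)$ elliptic in the appropriate Douglis--Nirenberg sense. The main technical obstacle is the verification of the Shapiro--Lopatinskii condition \eqref{bc}: after straightening the boundary and factoring the frozen principal symbol $A_0(x',\xi',\xi_n)=A_+A_-$ along the normal variable as in Section~\ref{fredsection}, the Dirichlet condition $\nu|_{\partial\Omega}=0$ paired with the stable subspace of $A_-$ produces a determinant that must be checked to be non--zero; this is where \eqref{cond-2d} enters a second time, now pointwise on $\partial\Omega$, and the incompressibility $\text{tr}(\nabla^s u_1)=0$ should make the algebraic computation explicit.

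Once ellipticity and Shapiro--Lopatinskii have been established, Theorem~\ref{fredholm2} applies with $s=1/2+\epsilon$, which sits just above the threshold $m_j+1/2=1/2$ dictated by the zeroth--order Dirichlet boundary condition. This yields the a priori estimate
\[
\norm{\nu}_{1/2+\epsilon,\Omega}+\norm{q}_{-1/2+\epsilon,\Omega}\leq C\sparen{\norm{v}_{1/2+\epsilon,\Omega}+\norm{\nu}_{-1/2+\epsilon,\Omega}+\norm{q}_{-3/2+\epsilon,\Omega}},
\]
after estimating the inhomogeneous source $-\omega^2 v-2\nabla\cdot(\mu_2\nabla^s v)$ in $H_{-1/2+\epsilon}$ by pairing against test functions and integrating by parts once--this is also what fixes the needed regularity of $\mu_2$ at $\mathcal{C}^{5/2+\epsilon}(\bar\Omega)$. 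Finally, the Fredholm character of the elliptic boundary problem supplies a finite--dimensional subspace $K\subset L^2(\Omega)$, and on the orthogonal complement $K^\perp$ the lower--order terms on the right--hand side are absorbed by a standard compactness/contradiction argument, delivering \eqref{mainest2}.
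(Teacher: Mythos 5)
There is a genuine gap at the heart of your argument: the coupled first--order system for $(\nu,q)$ that you propose to feed into Theorem~\ref{fredholm2} is not elliptic, for any choice of $u_1$. Writing $S(x)=\nabla^s u_1(x)$, the principal symbol of $(\nu,q)\mapsto 2\nabla\cdot(\nu S)+\nabla q$ is the $2\times 2$ matrix with columns $2\mathrm{i}\,S(x)\xi$ and $\mathrm{i}\xi$, whose determinant is $-2\,\xi^{\perp}\cdot\bigl(S(x)\xi\bigr)$ with $\xi^{\perp}=(\xi_2,-\xi_1)$. This determinant vanishes exactly when $\xi$ is an eigenvector of $S(x)$, and a real symmetric matrix always has two real eigendirections; equivalently, since incompressibility forces $\operatorname{tr}S=0$, the quadratic form $\xi\mapsto \xi^{\perp}\cdot(S\xi)$ is itself trace--free, hence indefinite, hence has real zeros. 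So \eqref{cond-2d} cannot rescue ellipticity here --- your observation that $S$ has two real orthogonal eigenvectors is precisely why the system is characteristic along two real directions, not why it is elliptic. The divergence constraint $\nabla\cdot v=0$ and the normalization $\int_\Omega q\,dx=0$ do not help, since neither contributes to the principal symbol in the unknowns $(\nu,q)$. With ellipticity gone, the Shapiro--Lopatinskii discussion, the a priori estimate, and the Fredholm step all lose their foundation.

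The paper's proof avoids this by eliminating the pressure \emph{before} invoking ellipticity: it applies a first--order differential operator (written there as $(\partial_{x_1},-\partial_{x_2})\cdot$) to the subtracted momentum equation, reducing the problem to the single scalar second--order equation \eqref{mueq} for $\mu=\mu_1-\mu_2$, with right--hand side $g$ controlled by two derivatives of $u_1-u_2$. Using $\operatorname{tr}\nabla^s u_1=0$ and a rotation so that \eqref{cond-2d} becomes $\partial_{x_1}u_1^1\neq 0$, the principal symbol of that scalar operator is $2|\xi|^2\,\partial_{x_1}u_1^1(x)$, a nonzero multiple of $|\xi|^2$ and therefore elliptic; the normal roots are $\xi_1=\pm\mathrm{i}\xi_2$, the Shapiro--Lopatinskii determinant is then checked directly, and Theorem~\ref{fredholm2} with $s=1/2+\epsilon$ yields \eqref{mainest2} after absorbing the lower--order remainder on the orthogonal complement of a finite--dimensional subspace, much as in your final step. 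The essential idea missing from your proposal is this pressure--eliminating reduction to a scalar elliptic operator acting on $\mu$ alone.
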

 In \cite{SAMRE} under the same hypotheses we proved
\begin{align*}
\norm{\mu_1-\mu_2}_{4,\Omega}\leq C\norm{u_1-u_2}_{5,\Omega}
\end{align*}
The difference between this theorem and Theorem 4 in \cite{SAMRE} is the norms on the estimates in \ref{mainest2}, and the shorter proof. The reduction in regularity is more suitable for numerical experiments as discussed in Section \ref{secfinal}. 
\begin{proof}
For the proof, we may assume without loss of generality that $\partial_{x_1}u_1^1(x) \neq 0$. We apply the operator $(\partial_{x_1},-\partial_{x_2})$ to \eqref{stokesdata} in order to eliminate the pressure term. For $\mu=\mu_1-\mu_2$, we derive
\begin{align}\label{mueq}
(\partial_{x_1},-\partial_{x_2})\cdot (2\nabla\cdot\mu \nabla^su_1)=g.
\end{align}
By definition, $g$ satisfies
\begin{align*}
\norm{g}_{l, \Omega}\leq C\norm{u_1-u_2}_{l+2, \Omega}
\end{align*}
for all $l\leq 0$. The principal symbol of the linear operator in \eqref{mueq} is given by
\begin{align*}
A_0(x,\xi)=2|\xi|^2\partial_{x_1}u_1^1(x).
\end{align*}
In order to verify the Shapiro-Lopatinskii condition (\ref{bc}), we compute the zeros of the polynomial in terms of $\xi_1$. Provided $\partial_{x_1}u(x)\neq 0$, the root in the upper half plane can be computed as
\begin{align*}
\xi_1=\pm i\xi_2.
\end{align*}
It follows that for $j=0,1$
\begin{align*}
b_{j0}=\frac{1}{2\pi i}\int\limits_{\gamma_+}\frac{1}{z-i}\,dz \neq 0.
\end{align*}
We then apply Theorem \ref{fredholm2}.
\end{proof}

\section{Stability in Dimension 3}\label{stab3d}
We prove the following theorem, which is the analogue of Theorem 3 in \cite{SAMRE}.
\begin{thm}\label{conditional}
Let $(u_1,p_1)$ and $(\tilde{u}_1,\tilde{p}_1)$ be solutions to (\ref{stokesdata}) with different boundary conditions.  In other words we set $F(x)=F_1(x)$ and $F(x)=\tilde{F}_1(x)$ in (\ref{stokesdata}) for the respective solutions but they share $\mu=\mu_1$. We assume that there exists a positive constant $C$  independent of $(x,\xi)\in T^*\bar{\Omega}$, where $T^* \bar{\Omega}$ denotes the cotangent space, such that
\begin{align}
C|\xi|^4\geq |(\nabla^su_1(x)\xi)\times\xi|^2+|(\nabla^s\tilde{u}_1(x)\xi)\times\xi|^2\geq \frac{1}{C}|\xi|^4.
\label{stabilizerssecond}
\end{align}
Let $(u_2,p_2)$ and $(\tilde{u}_2,\tilde{p}_2)$ be solutions to the Stokes system (\ref{stokesdata}) with $\mu=\mu_2$ and $F(x)=F_1(x)$ and $F(x)=\tilde{F}_1(x)$, respectively. Let $\epsilon\in (0,1)$. Assume that $\mu_1, \mu_2 \in \mathcal{C}^{7/2+\epsilon}(\bar{\Omega})$ and $\mu_1=\mu_2$ on $\partial\Omega$.  Then
there exist a constant $C$, depending on $\|\mu_2\|_{\mathcal{C}^{7/2+\epsilon}(\bar{\Omega})}$ and a finite dimensional subspace $K$ of $H^{1/2+\epsilon}(\Omega)$ such that
\begin{align}\label{mainest}
\norm{\mu_1-\mu_2}_{1/2+\epsilon,\Omega}\leq C \sparen{\norm{u_1-u_2}_{3/2+\epsilon, \Omega}+\norm{\tilde{u}_1-\tilde{u}_2}_{3/2+\epsilon,\Omega}},
\end{align}
provided that $(\mu_1-\mu_2) \perp K $.
\end{thm}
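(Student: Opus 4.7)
The plan is to mimic the 2D argument, using the three-dimensional curl $\nabla\times$ in place of the planar rotated divergence to eliminate the pressure. Subtracting the Stokes system (\ref{stokesdata}) for $(u_2,p_2)$ from that for $(u_1,p_1)$, and similarly for the tilded pair, and then applying $\nabla\times$, produces for $\mu:=\mu_1-\mu_2$ two vector-valued identities
\begin{align*}
L_1\mu := 2\nabla\times\nabla\cdot\bigl(\mu\,\nabla^s u_1\bigr) = g, \qquad \tilde L_1\mu := 2\nabla\times\nabla\cdot\bigl(\mu\,\nabla^s \tilde u_1\bigr) = \tilde g
\end{align*}
in $\Omega$, with $\mu=0$ on $\partial\Omega$ by hypothesis. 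Commuting derivatives and using the $\mathcal{C}^{7/2+\epsilon}$ regularity of $\mu_2$ one obtains $\norm{g}_{-1/2+\epsilon,\Omega}+\norm{\tilde g}_{-1/2+\epsilon,\Omega}\leq C\bigl(\norm{u_1-u_2}_{3/2+\epsilon,\Omega}+\norm{\tilde u_1-\tilde u_2}_{3/2+\epsilon,\Omega}\bigr)$. A direct index calculation shows that the principal symbol of $L_1$, viewed as an order-two scalar differential operator, is
\begin{align*}
\sigma(L_1)(x,\xi)=-2i\,\xi\times\bigl(\nabla^s u_1(x)\,\xi\bigr)\in\mathbb{C}^3,
\end{align*}
and analogously for $\tilde L_1$.

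Hypothesis (\ref{stabilizerssecond}) is now precisely the statement that the stacked operator $(L_1,\tilde L_1)$ is overdetermined elliptic, its principal symbol being a $6\times 1$ matrix whose squared norm is bounded from below by $c|\xi|^4$. To bring Theorem \ref{fredholm2}, stated for square systems, to bear, I pass to the scalar fourth-order operator $P:=L_1^{\ast}L_1+\tilde L_1^{\ast}\tilde L_1$, whose principal symbol coincides with the left-hand side of (\ref{stabilizerssecond}) and is therefore elliptic with lower bound $c|\xi|^4$. The Dirichlet trace $\mu|_{\partial\Omega}=0$ supplies one of the two boundary conditions required for the fourth-order BVP; the second is obtained by restricting an appropriately chosen order-zero scalar component of $L_1\mu-g$ (or of $\tilde L_1\mu-\tilde g$) to $\partial\Omega$. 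With this choice the Shapiro--Lopatinskii condition (\ref{bc}) reduces, exactly as in the 2D proof, to a contour integral around the roots $\xi_n=\pm i|\xi'|$ of $\sigma(P)$, whose non-vanishing follows from (\ref{stabilizerssecond}) applied at boundary points.

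Finally, Theorem \ref{fredholm2} applied at $s=1/2+\epsilon$ and $m=4$, with all $m_j\leq 0$ (so that $s>m_j+1/2$), yields
\begin{align*}
\norm{\mu}_{1/2+\epsilon,\Omega}\leq C\bigl(\norm{P\mu}_{-7/2+\epsilon,\Omega}+\norm{\mu}_{-1/2+\epsilon,\Omega}\bigr).
\end{align*}
The mapping properties of $L_1^{\ast},\tilde L_1^{\ast}$ give $\norm{P\mu}_{-7/2+\epsilon,\Omega}\leq C\bigl(\norm{g}_{-1/2+\epsilon,\Omega}+\norm{\tilde g}_{-1/2+\epsilon,\Omega}\bigr)$, and combining this with the first-step bounds on $g,\tilde g$ produces (\ref{mainest}) after the standard Peetre-type argument absorbs the lower-order term $\norm{\mu}_{-1/2+\epsilon,\Omega}$ modulo a finite-dimensional subspace $K$. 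I expect the main technical obstacle to be the careful choice and verification of the second boundary condition for $P$: because $\sigma(P)$ depends on both background solutions $u_1$ and $\tilde u_1$, the non-vanishing of the Shapiro--Lopatinskii determinant must be inferred uniformly on $\partial\Omega$ from the abstract positivity assumption (\ref{stabilizerssecond}), and the accompanying regularity bookkeeping for the right-hand sides $g,\tilde g$ is what forces the $\mathcal{C}^{7/2+\epsilon}$ requirement on $\mu_2$.
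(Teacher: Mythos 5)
Your proposal follows essentially the same route as the paper: eliminate the pressure with $\nabla\times$, augment with the second family of measurements, pass to the elliptic normal operator (your $P=L_1^{\ast}L_1+\tilde L_1^{\ast}\tilde L_1$ is exactly the paper's $\mathcal{A}^*_*\mathcal{A}_*$, whose principal symbol is the left-hand side of the ellipticity hypothesis \eqref{stabilizerssecond}), verify the Shapiro--Lopatinskii condition \eqref{bc} from the location of the roots in $\xi_n$, and invoke Theorem \ref{fredholm2} to absorb the lower-order term modulo a finite-dimensional kernel $K$. The only slip is an off-by-one in your intermediate bound for $g$ --- since $g=-\nabla\times[\nabla\cdot\mu_2\nabla^s w]-\omega^2\nabla\times w$ is third order in $w=u_1-u_2$, one has $\norm{g}_{-3/2+\epsilon,\Omega}\leq C\norm{w}_{3/2+\epsilon,\Omega}$ rather than $\norm{g}_{-1/2+\epsilon,\Omega}\leq C\norm{w}_{3/2+\epsilon,\Omega}$ --- but this washes out in the final chain, because $L_1^{\ast}$ has order $2$ and therefore $\norm{P\mu}_{-7/2+\epsilon,\Omega}\leq C\sparen{\norm{g}_{-3/2+\epsilon,\Omega}+\norm{\tilde g}_{-3/2+\epsilon,\Omega}}$ still yields \eqref{mainest}.
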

The main difference from Theorem 3 in \cite{SAMRE} is the reduction of regularity on the norms. However there is a slightly stronger assumption on the symbol classes, in \cite{SAMRE} the upper bound was not necessary. 

We begin as previously by eliminating the pressure terms from the Stokes systems. We consider the equations for $i=1,2$ which are
\begin{align*}
\nabla\cdot\mu_i\nabla^su_i+\omega^2u_i+\nabla p_i=0
\end{align*}
When we take the cross product of both sides this yields the equation
\begin{align*}
\nabla\times\nabla\cdot \mu_i\nabla^s u_i+\omega^2\nabla\times u_i=0
\end{align*}
If we set $\mu=\mu_1-\mu_2$, $w=u_1-u_2$ and subtract the first equation from the second equation, we obtain
\begin{align}\label{ida}
A_{u_1} \mu = \nabla\times\nabla\cdot \mu \nabla^s u_1=g
\end{align}
with 
\begin{align*}
g=-\nabla\times[\nabla\cdot\mu_2\nabla^s w]-\omega^2\nabla\times w.
\end{align*}
Clearly we then have that there is a constant $C$ which depends on $\norm{\mu_2}_{C^{2+l}(\overline{\Omega})}$ norm such that
\begin{align*}
\norm{g}_{l, \Omega}\leq C\norm{w}_{l+3,\Omega}
\end{align*}
for all $l$. We view the identity \eqref{ida} as an over-determined second-order partial differential equation with principal symbol 
\begin{align*}
(\nabla^su_1(x)\xi)\times\xi
\end{align*}
which is not elliptic. We augment the operator with a second set of measurements: 
\begin{align*}
A_{u_2} \mu = \nabla\times\nabla\cdot \mu \nabla^s u_2=\tilde{g}.
\end{align*}

We set
\begin{align*}
A_*=(A_{u_1},A_{\tilde{u}_1}) \qquad B_*=(\mbox{Trace on } \partial \Omega,\mbox{Trace on } \partial \Omega),
\end{align*}
and we analyse the new system
\begin{align}\label{system}
\mathcal{A}_*[\mu]=(A_*[\mu], B_*[\mu])=(g,\tilde{g}, 0,0).
\end{align}
The condition (\ref{stabilizerssecond}) ensures that the system is now injectively elliptic, whence $\mathcal{A}^*_*\mathcal{A}_*$ is elliptic and satisfies the assumptions of Section \ref{fredsection}. If $(\mathcal{A}^*_*\mathcal{A}_*)^{\sim}$ is a parametrix of $\mathcal{A}^*_*\mathcal{A}_*$, a left parametrix of $\mathcal{A}_*$ is given by $(\mathcal{A}^*_*\mathcal{A}_*)^{\sim} \mathcal{A}^*_*$. 

Therefore, if the augmented system defines an elliptic boundary value problem, we conclude:
$$\|\mu\|_{s,\Omega}\leq C(\|g\|_{s-2, \Omega} + \|\tilde{g}\|_{s-2, \Omega}) \leq C (\|u_1-u_2\|_{s+1, \Omega}+\|\tilde{u}_1-\tilde{u}_2\|_{s+1, \Omega})\ .$$

To show the Shapiro--Lopatinskii condition \eqref{bc}, we argue as follows. According to Hypothesis (\ref{stabilizerssecond}), the polynomial $|(\nabla^su_1(x)\xi)\times\xi|^2+|(\nabla^s\tilde{u}_1(x)\xi)\times\xi|^2$ is elliptic and therefore, if $r=\sqrt{\xi^2_2+\xi_3^2} \neq 0$, two of its roots each lie in the upper resp.~lower half-plane. As the roots  are homogeneous in $r$, it is easy to show that $\det\sparen{b_{jk0}(x',\xi')}^{m_+}_{j,k}\neq 0$. We then apply Theorem \ref{fredholm2}. 

\section{Reduction of Regularity in Weighted Sobolev Classes}
To further reduce the order in the stability estimates of Theorems \ref{conditional2} and \ref{conditional}, we recall the Fredholm properties of a Shapiro-Lopatinskii elliptic boundary value problem, $\mathcal{A}=(A,B)$ with $\mathrm{ord}(A)=m$. See \cite{lionsmag} for more detailed results.
 
Fix a boundary defining function $\rho\in C^{\infty}(\Omega)$. For $s\in \mathbb{N}_0$ we let
\begin{align*}
\Theta^s=\{u: \rho^{|\alpha|}D^{\alpha}u\in L^2 \,\,\forall \alpha, |\alpha|\leq s\}.
\end{align*}
We set $\Theta^{-s}=(\Theta)^*$. Elements in $\Theta^{-s}$ are of the form
\begin{align*}
f=\sum\limits_{|\alpha|\leq s} D^{\alpha}(\rho^{|\alpha|}f_{\alpha})\,\,\, \textrm{for some}\,\, f_{\alpha}\in L^2. 
\end{align*}
Let
\begin{align*}
D_A^0(\Omega)=\{ u\in L^2: Au\in \Theta^{-s}(\Omega)\}
\end{align*}
endowed with the graph norm of $A$. The basic Fredholm theorem then reads:
\begin{thm}
$\mathcal{A}$ defines a Fredholm operator from $D_{A}^0(\Omega)$ to 
\begin{align*}
\Theta^{-m}(\Omega)\times \prod\limits_{j=0}^{m_+-1}H^{s-m_j-1/2}(\Gamma).
\end{align*}
\end{thm}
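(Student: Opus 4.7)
The plan is to deduce this weighted-space Fredholm statement from the classical Shapiro--Lopatinskii Fredholm theorem (Theorem \ref{fredholm2}) by a duality argument built around the explicit representation
\[
f = \sum_{|\alpha|\leq s} D^\alpha(\rho^{|\alpha|} f_\alpha), \qquad f_\alpha \in L^2,
\]
of elements of $\Theta^{-s}$ recorded in the excerpt. The strategy, which follows the line developed in \cite{lionsmag}, is to construct a parametrix of the formal adjoint $\mathcal{A}^*$ in the classical Sobolev scale, transpose it, and show that the transpose realises a parametrix for $\mathcal{A}$ between the weighted spaces.

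First I would verify that $D_A^0(\Omega)$ equipped with the graph norm of $A$ is a Hilbert space and that a generalised trace extends the usual boundary restriction to a continuous map
\[
B: D_A^0(\Omega) \to \prod_{j=0}^{m_+-1} H^{s-m_j-1/2}(\Gamma).
\]
This is a Lions--Magenes trace theorem: one integrates the pairing $\langle Au,\varphi\rangle_{L^2}$ against a test function $\varphi$, uses Green's identity, and exploits the interior ellipticity of $A$ together with the $\rho^{|\alpha|}$-weighted regularity near $\Gamma$ to define the boundary values by duality. Since $\mathcal{A}$ is Shapiro--Lopatinskii elliptic, so is $\mathcal{A}^*$, and Theorem \ref{fredholm2} then produces a parametrix $\mathcal{P}$ of $\mathcal{A}^*$ in the classical Sobolev scale with $\mathcal{P}\mathcal{A}^* = I - K_1$ and $\mathcal{A}^*\mathcal{P} = I - K_2$, the $K_i$ compact. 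Transposing, $\mathcal{P}^*$ is a two-sided parametrix for $\mathcal{A}$ acting between the dual spaces; together with the compact embedding $\Theta^{-m+1}(\Omega) \hookrightarrow \Theta^{-m}(\Omega)$, which is immediate from the smoothness of $\rho$, the standard abstract scheme yields finite-dimensional kernel and cokernel and closed range.

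The main obstacle, in my view, is verifying that the transposed parametrix $\mathcal{P}^*$ maps $\Theta^{-m}(\Omega) \times \prod_j H^{s-m_j-1/2}(\Gamma)$ continuously into the graph-norm space $D_A^0(\Omega)$. Concretely this requires evaluating $\mathcal{P}^*$ on a generic element written as $\sum_{|\alpha|\leq m} D^\alpha(\rho^{|\alpha|} f_\alpha)$ and commuting the weights $\rho^{|\alpha|}$ past the parametrix modulo lower-order remainders. The commutators are controlled by the pseudolocal character of $\mathcal{P}$ and the smoothness of $\rho$, but the bookkeeping is intricate because the weights interact nontrivially with the boundary symbolic construction outlined in Section \ref{fredsection}; once it is done, the resulting remainder terms produce exactly the compact perturbations required for the Fredholm conclusion.
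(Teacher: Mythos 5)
The paper offers no proof of this statement: it is quoted as a known result, with a pointer to \cite{lionsmag}, immediately after the weighted spaces $\Theta^{\pm s}$ and $D_A^0(\Omega)$ are introduced. Your transposition strategy --- pass to the formal adjoint problem, invoke the classical Shapiro--Lopatinskii theory for $\mathcal{A}^*$, and transpose back to obtain the Fredholm property between $D_A^0(\Omega)$ and the dual-scale target space --- is precisely the Lions--Magenes method that the citation stands for, so in that sense you have reconstructed the intended route rather than found a different one.

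That said, as written your text is an outline rather than a proof, and the gaps are not cosmetic. First, Theorem \ref{fredholm2} as stated in the paper delivers a Fredholm operator and an a priori estimate with a lower-order remainder; it does not hand you a two-sided parametrix $\mathcal{P}$ with $\mathcal{P}\mathcal{A}^*=I-K_1$, $\mathcal{A}^*\mathcal{P}=I-K_2$. Such a parametrix does exist for Shapiro--Lopatinskii elliptic problems, but producing it (and identifying the \emph{adjoint} boundary conditions via Green's formula and checking that they again satisfy the Shapiro--Lopatinskii condition) is an additional construction you would have to carry out, not a restatement of Theorem \ref{fredholm2}. Second, the step you yourself flag as the main obstacle --- that $\mathcal{P}^*$ maps $\Theta^{-m}(\Omega)\times\prod_j H^{s-m_j-1/2}(\Gamma)$ continuously into the graph-norm space, i.e.\ commuting the weights $\rho^{|\alpha|}$ past the parametrix with controllable remainders --- is exactly the content of the theorem, and you defer it rather than prove it. Until that is done the argument establishes nothing. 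Finally, the claim that the compactness of $\Theta^{-m+1}(\Omega)\hookrightarrow\Theta^{-m}(\Omega)$ is ``immediate from the smoothness of $\rho$'' is too quick: it requires a Rellich-type theorem for the weighted positive-order spaces followed by duality. None of these points indicates a wrong approach --- the route is the standard and correct one --- but each must be filled in before the sketch becomes a proof, and the paper itself supplies none of this, relying entirely on \cite{lionsmag}.
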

This allows us to conclude in dimension 2:
\begin{cor}
Under the hypotheses of Theorem \ref{conditional2}, there exists a finite dimensional subspace $K \subset L^2(\Omega)$ and a constant $C>0$ depending on $\mu_1$ and $\Omega$ such that 
\begin{align}\label{mainest2}
\norm{\mu_1-\mu_2}_{L^2(\Omega)}\leq C\norm{\rho^{-2}(u_1-u_2)}_{L^2(\Omega)}
\end{align}
provided  $\mu_1-\mu_2 \perp K$.
\end{cor}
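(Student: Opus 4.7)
The plan is to reapply the argument of Theorem \ref{conditional2}, but now with the Fredholm theorem of Section 5 (weighted Sobolev spaces $\Theta^{-s}$) in place of Theorem \ref{fredholm2}. Recall that for $\mu = \mu_1-\mu_2$ and $w = u_1-u_2$, the proof of Theorem \ref{conditional2} produces
\begin{align*}
A_{u_1}\mu = g, \qquad \mu|_{\partial\Omega}=0,
\end{align*}
where $A_{u_1}$ is a second-order Shapiro--Lopatinskii elliptic operator on $\Omega$ (ellipticity uses hypothesis \eqref{cond-2d}, and the Dirichlet boundary datum vanishes because $\mu_1 = \mu_2$ on $\partial\Omega$), and $g$ is obtained by applying the first-order curl-like operator $(\partial_{x_2},-\partial_{x_1})$ to $2\nabla\cdot\mu_2\nabla^s w + \omega^2 w$. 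In particular, $g$ is a sum of derivatives of total order at most $2$ acting on expressions involving $\mu_2$ and $w$.

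First, I would feed $\mathcal{A}=(A_{u_1},\mathrm{Tr})$ with $m=2$ into the weighted Fredholm theorem, taking the target space $\Theta^{-2}(\Omega)\times\prod H^{-m_j-1/2}(\partial\Omega)$. Since the boundary data are identically zero, the Fredholm alternative produces a finite-dimensional subspace $K\subset L^2(\Omega)$ (the kernel of the boundary-value problem on $D_{A_{u_1}}^0(\Omega)$) and a constant $C>0$ such that, whenever $\mu\perp K$,
\begin{align*}
\|\mu\|_{L^2(\Omega)} \leq C\,\|g\|_{\Theta^{-2}(\Omega)}.
\end{align*}

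Second, I would estimate $\|g\|_{\Theta^{-2}(\Omega)}$ in terms of $\|\rho^{-2}w\|_{L^2(\Omega)}$. This amounts to exhibiting an explicit decomposition of $g$ of the form $\sum_{|\alpha|\leq 2}D^\alpha(\rho^{|\alpha|}f_\alpha)$ with each $f_\alpha\in L^2$ and $\|f_\alpha\|_{L^2}\leq C\|\rho^{-2}w\|_{L^2}$. For the principal (second-order) part, writing $D^\alpha(\mu_2 w) = D^\alpha(\rho^{2}\cdot \rho^{-2}\mu_2 w)$ immediately gives $f_\alpha = \rho^{-2}\mu_2 w$, which lies in $L^2$ with norm bounded by $\|\mu_2\|_{L^\infty}\|\rho^{-2}w\|_{L^2}$. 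The lower-order contributions, arising when derivatives fall on $\mu_2$, regroup into terms of the form $D^\beta(\rho^{|\beta|}f_\beta)$ with $|\beta|\leq 1$; since $\rho$ is bounded on $\bar\Omega$, one has $\|\rho^{-j}w\|_{L^2}\leq C\|\rho^{-2}w\|_{L^2}$ for all $0\leq j\leq 2$, so these terms are controlled by the same quantity.

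The main obstacle is the bookkeeping in this decomposition: one must genuinely display $g$ as an element of $\Theta^{-2}$ rather than merely invoke the interior bound $\|g\|_{H^{-2}}\leq C\|w\|_{L^2}$, because the whole point of the weighted estimate is to track the degenerate behaviour of $\rho^{-2}w$ near $\partial\Omega$. Concretely, one needs to commute powers of $\rho$ through the constant-coefficient differential operator $(\partial_{x_2},-\partial_{x_1})$ and through $\nabla\cdot\mu_2\nabla^s$, producing only error terms of the permitted form $D^\beta(\rho^{|\beta|}f_\beta)$. This is a routine but somewhat notation-heavy computation, using that $\rho\in\mathcal{C}^\infty(\bar\Omega)$ with $|\nabla\rho|$ bounded away from $0$ in a neighbourhood of $\partial\Omega$. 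Combining the two displayed estimates above then gives the corollary.
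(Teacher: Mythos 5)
Your overall strategy is exactly what the paper intends: the corollary is stated in the text without any proof of its own, immediately after the weighted Fredholm theorem, and is clearly meant to follow by rerunning the proof of Theorem \ref{conditional2} with that theorem in place of Theorem \ref{fredholm2} and then estimating $\|g\|_{\Theta^{-2}}$ via the explicit decomposition $\sum_{|\alpha|\le 2}D^\alpha(\rho^{|\alpha|}f_\alpha)$. Your two displayed steps (the Fredholm estimate $\|\mu\|_{L^2}\le C\|g\|_{\Theta^{-2}}$ modulo the finite-dimensional kernel, and the weighted bound on $g$) are the right skeleton, and you correctly identify that the whole content lies in exhibiting $g$ as an element of $\Theta^{-2}$ with controlled $f_\alpha$.

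There is, however, one point where your bookkeeping does not match the operator you are actually handling, and it is the only place where the argument could genuinely fail. You describe the principal contribution to $g$ as $D^\alpha(\mu_2 w)$ with $|\alpha|\le 2$, but the explicit formula is $g=-\mathrm{curl}\,(2\nabla\cdot\mu_2\nabla^s w)-\omega^2\,\mathrm{curl}\,w$, whose leading term carries \emph{three} derivatives of $w$: it has the shape $\partial_i\partial_j(\mu_2\,\partial_k w_l)$ plus lower order. Written in the permitted form $D^\beta(\rho^{|\beta|}f_\beta)$ with $|\beta|\le 2$, the best you get without further input is $f_\beta=\rho^{-2}\mu_2\,\partial_k w_l$, which is controlled by $\|\rho^{-2}w\|_{H^1}$, not by $\|\rho^{-2}w\|_{L^2}$; pushing all three derivatives outside lands you in $\Theta^{-3}$, which the $m=2$ Fredholm theorem does not accept. (This is precisely the distinction between the $2d$ corollary, which claims an $L^2$ right-hand side, and the $3d$ corollary, which claims $H^1$.) To close this you must either use additional structure --- the divergence-free condition on $w$, or the Stokes equation satisfied by $w$, to trade one derivative --- or accept the weaker conclusion with $\|\rho^{-2}(u_1-u_2)\|_{H^1(\Omega)}$ on the right. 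Your proposal inherits this gap from the paper's unproved assertion $\|g\|_{l}\le C\|u_1-u_2\|_{l+2}$, but as written the decomposition you display does not justify the $L^2$ bound claimed in the statement.
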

In dimension 3 we have:
\begin{cor}
Under the hypotheses of Theorem \ref{conditional}, and there exists a constant $C$, depending on $\mu_1,\tilde{\mu}_1$ and $\Omega$ and a finite dimensional subspace $K$ of $L^2(\Omega)$ such that
\begin{align*}
\norm{\mu_1-\mu_2}_{L^2(\Omega)}\leq C \sparen{\norm{\rho^{-2}(u_1-u_2)}_{H^1(\Omega)}+\norm{\rho^{-2}(\tilde{u}_1-\tilde{u}_2)}_{H^1(\Omega)}},
\end{align*}
provided that $(\mu_1-\mu_2) \perp K$.
\end{cor}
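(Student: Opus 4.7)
The plan is to mirror the argument of Theorem \ref{conditional}, this time applied to the augmented elliptic boundary value problem $\mathcal{A}_* = (A_{u_1}, A_{\tilde u_1}; B_*)$ from (\ref{system}), but replacing the classical Fredholm estimate by the weighted analogue just recalled. Since $A_*$ has order $2$ and the trace $B_*$ has order $0$, the left parametrix $(\mathcal{A}^*_*\mathcal{A}_*)^{\sim}\mathcal{A}^*_*$ carries over to the weighted setting and produces a Fredholm map
\[
\mathcal{A}_* : D^0_{\mathcal{A}_*}(\Omega) \longrightarrow \Theta^{-2}(\Omega)^2 \times \prod_j H^{s-1/2}(\partial\Omega).
\]
The boundary hypothesis $\mu_1 = \mu_2$ on $\partial\Omega$ kills the boundary component of $\mathcal{A}_*(\mu_1-\mu_2)$, so modulo a finite dimensional kernel $K \subset L^2(\Omega)$ I would read off
\[
\norm{\mu_1 - \mu_2}_{L^2(\Omega)} \leq C\bigl(\norm{A_{u_1}(\mu_1-\mu_2)}_{\Theta^{-2}} + \norm{A_{\tilde u_1}(\mu_1-\mu_2)}_{\Theta^{-2}}\bigr).
\]

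Next I would bound each $\Theta^{-2}$-norm by $\norm{\rho^{-2}w}_{H^1}$, where $w = u_1 - u_2$ (and analogously for $\tilde w$). By (\ref{ida}),
\[
A_{u_1}(\mu_1-\mu_2) = g = -\nabla\times[\nabla\cdot\mu_2\nabla^s w] - \omega^2\nabla\times w
\]
is a sum of terms of the form $c_\beta(x) D^\beta w$ with $|\beta|\leq 3$ and smooth coefficients $c_\beta$ involving derivatives of $\mu_2$. Using the duality $\Theta^{-2} = (\Theta^2)^*$, I would test $g$ against $v\in C_c^\infty(\Omega)$, integrating by parts to transfer at most two derivatives of $w$ onto $v$; the regularity $\mu_2 \in \mathcal{C}^{7/2+\epsilon}(\bar\Omega)$ makes this redistribution harmless. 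Each resulting summand then reduces to a pairing
\[
\int \rho^{-2} D^\alpha w \cdot \rho^2 c_{\alpha,\gamma}(x) D^\gamma v\, dx,\qquad |\alpha|\leq 1,\ |\gamma|\leq 2,
\]
whose Cauchy--Schwarz bound against $\norm{v}_{\Theta^2}$ gives $\norm{g}_{\Theta^{-2}} \leq C\bigl(\norm{\rho^{-2}w}_{L^2} + \norm{\rho^{-2}\nabla w}_{L^2}\bigr)$.

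The last step is to repackage this bound as $\norm{\rho^{-2}w}_{H^1}$. Rearranging $\nabla(\rho^{-2}w) = \rho^{-2}\nabla w - 2\rho^{-3}w\,\nabla\rho$ and noting that membership of $\rho^{-2}w$ in $H^1$ already forces its trace on $\partial\Omega$ to vanish, Hardy's inequality absorbs the $\rho^{-3}w$ term into $\norm{\nabla(\rho^{-2}w)}_{L^2}$, yielding $\norm{\rho^{-2}\nabla w}_{L^2} \leq C\norm{\rho^{-2}w}_{H^1}$. Running the same argument with $(g,w)$ replaced by $(\tilde g,\tilde w)$ and summing produces the claimed inequality.

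The main obstacle is the bookkeeping in the integration-by-parts step: after Leibniz-expanding the coefficient derivatives of $\mu_2$, one must check that every derivative landing on $v$ comes with a matching power of $\rho$ so as to be absorbed by $\norm{v}_{\Theta^2}$. This is essentially the point at which the smoothness $\mu_2 \in \mathcal{C}^{7/2+\epsilon}(\bar\Omega)$ inherited from Theorem \ref{conditional} is used, the fractional $\epsilon$-room being consumed by the parametrix error in the weighted Fredholm estimate.
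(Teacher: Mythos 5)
Your overall strategy coincides with what the paper intends: the corollary is stated with no written proof, as a direct application of the weighted Fredholm theorem to the augmented system \eqref{system}, and your duality computation of $\norm{g}_{\Theta^{-2}}$ is exactly the missing detail. That part of the argument is sound: moving two derivatives onto the test function $v$ and using $\rho^{|\gamma|}D^\gamma v\in L^2$ (together with boundedness of $\rho$) correctly yields $\norm{g}_{\Theta^{-2}}\leq C\sparen{\norm{\rho^{-2}w}_{L^2}+\norm{\rho^{-2}\nabla w}_{L^2}}$.

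The genuine gap is in the final repackaging step. You assert that membership of $h:=\rho^{-2}w$ in $H^1(\Omega)$ forces its trace on $\partial\Omega$ to vanish, and then invoke Hardy's inequality to control $\rho^{-1}h=\rho^{-3}w$. The trace claim is false: take $w=\rho^2$, so $h\equiv 1\in H^1(\Omega)$ with trace $1$; for this $w$ one has $\rho^{-2}\nabla w \sim \rho^{-1}\nabla\rho\notin L^2(\Omega)$ near $\partial\Omega$, while $\norm{\rho^{-2}w}_{H^1}$ is finite. Hence the inequality $\norm{\rho^{-2}\nabla w}_{L^2}\leq C\norm{\rho^{-2}w}_{H^1}$ fails in general, and it is not rescued by the hypothesis $u_1=u_2$ on $\partial\Omega$, which only gives $w\in H^1_0$, not $h\in H^1_0$ (the trace of $w=\rho^2 h$ vanishes for \emph{every} $h\in H^1$). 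Hardy's inequality requires $h\in H^1_0(\Omega)$, i.e.\ that $w$ vanish to order strictly greater than two at the boundary, which is not among the hypotheses. The step becomes unnecessary if one reads the right-hand side of the corollary as the weighted norm $\sum_{|\alpha|\leq 1}\norm{\rho^{-2}D^\alpha(u_1-u_2)}_{L^2(\Omega)}$ --- which is the quantity your $\Theta^{-2}$ estimate actually produces and is the natural output of the weighted Fredholm theorem --- but if the literal $H^1$-norm of the product $\rho^{-2}(u_1-u_2)$ is meant, you must either add the vanishing-to-higher-order hypothesis or supply a different argument for the boundary layer.
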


\section{Local Injectivity Results}\label{injectivity}
Recall that we say the principal symbol is non-characterstic at the origin if $p(x,\xi)$ is such that $p(0,\xi)\neq 0$ with $\xi=(0,0, . .,0,1)$. We set $\tau=\xi_{n}$. We assume the following three hypotheses, given some $\epsilon>0$
\begin{enumerate}
\item The symbol $p(x,\xi',\tau)=0$ has at most simple real zeros and at most double complex zeros in terms of $\tau$. 
\item If $\tau_1,\tau_2$ are distinct zeros of $p(x,\xi',\tau)=0$ then this implies $|\tau_1-\tau_2|\geq \epsilon$. 
\item For any non-real zero $\tau$ of $p(x,\xi',\tau)$, we have $|\Im \tau|\geq \epsilon$.
\end{enumerate}

We recall the following uniqueness result, based on a refined Carleman estimate \cite{nirenberg}. 

\begin{thm}
Assume that the plane $x_{n}=0$ is non-characteristic at the origin and the Hypotheses (1-3) above hold. Let $l\geq 0$ and $w \in H^l(\mathbb{R}^d)$ a solution of $Pw=0$ in a neighborhood of the origin which vanishes identically for $x_{n}<0$. Then $w\equiv 0$ in a full neighborhood of the origin. 
\end{thm}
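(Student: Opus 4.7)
The plan is to establish a Carleman estimate adapted to the root structure described in Hypotheses (1)--(3) and then invoke it via a standard cutoff argument. First, I would choose a smooth weight $\phi$ defined in a neighborhood of the origin with $\phi(0)=0$ and $\nabla\phi(0)$ pointing into $\{x_n<0\}$, and satisfying strong pseudo-convexity with respect to $P$; a typical choice is
\begin{align*}
\phi(x)=-x_n+\gamma\left(|x'|^2+x_n^2\right)
\end{align*}
for a suitable $\gamma>0$. The level sets $\{\phi=c\}$ for small $c<0$ then bend into the region $\{x_n>0\}$, which is the geometric mechanism that will let us propagate the vanishing of $w$ from $\{x_n<0\}$ to a full neighborhood of the origin.

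Second, I would prove a weighted a priori inequality
\begin{align*}
\sum_{|\alpha|\leq m-1}\tau^{2(m-|\alpha|)-1}\norm{e^{\tau\phi}D^\alpha u}_{L^2}^2\leq C\norm{e^{\tau\phi}Pu}_{L^2}^2
\end{align*}
valid for $u\in C_c^\infty$ supported in a sufficiently small ball and for $\tau$ large. The derivation follows the Calder\'on--Nirenberg scheme: conjugating $P$ by $e^{\tau\phi}$ produces an operator $P_\phi$ whose symbol is a polynomial in $\tau$, and the estimate reduces to a positive lower bound for the commutator symbol $\frac{1}{2i}\{\overline{p_\phi},p_\phi\}$ on the characteristic set of $p_\phi$. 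Hypotheses (1)--(2) control the geometry of the real roots (simple, with uniform gaps) so that one can microlocalise near each, while Hypothesis (3) supplies the uniform lower bound $|\Im\tau|\geq\varepsilon$ needed to handle the double complex roots.

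Third, I would apply the Carleman estimate to $\chi w$, where $\chi\in C_c^\infty$ equals $1$ in a small ball $B_{r_1}$ around the origin and vanishes outside $B_{r_2}$ with $r_2>r_1$. Since $Pw=0$ and $w$ vanishes for $x_n<0$, one has $P(\chi w)=[P,\chi]w$, supported in the annular region $\{r_1\leq|x|\leq r_2\}\cap\{x_n\geq 0\}$. By construction of $\phi$, the choice of $r_1,r_2$ can be made so that $\phi$ is strictly smaller on the support of $[P,\chi]w$ than on $B_{r_1}\cap\{x_n>0\}$. Splitting both sides of the Carleman inequality according to these regions and letting $\tau\to\infty$ forces $w$ to vanish on $B_{r_1}\cap\{x_n>0\}$, and hence on a full neighborhood of the origin.

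The main obstacle is the Carleman estimate itself in the presence of double complex characteristic roots. The classical Calder\'on theorem only handles simple complex roots, and Nirenberg's refinement, which is what the paper cites, accommodates the double ones by a more delicate factorization of $P_\phi$ and a careful symbolic calculus argument that uses precisely the uniform separation in Hypothesis (3). A secondary technical point is the low regularity $w\in H^l$: one has to either mollify $w$ and pass to the limit, or interpret the Carleman inequality in a dual form, so that the argument is legitimate for all $l\geq 0$ rather than only for smooth $w$.
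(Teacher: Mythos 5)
The first thing to note is that the paper itself offers no proof of this statement: it is introduced with ``We recall the following uniqueness result, based on a refined Carleman estimate \cite{nirenberg}'' and is used as a black box in Corollaries \ref{cor2d} and \ref{cor3d}. So there is no in-paper argument to compare against; what you have done is reconstruct, in outline, the Calder\'on--Nirenberg proof from the cited reference. Your overall architecture is the right one -- pseudo-convex weight, conjugated operator $P_\phi$, commutator positivity on $\mathrm{char}(p_\phi)$, cutoff and $\tau\to\infty$ -- and you correctly identify both the role of Hypotheses (1)--(3) and the two places where the classical Calder\'on theorem is insufficient (double complex roots, and $w$ merely in $H^l$).

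Two concrete issues. First, your weight has the wrong sign on the tangential quadratic term. With $\phi(x)=-x_n+\gamma\left(|x'|^2+x_n^2\right)$ one has $\phi(x',0)=\gamma|x'|^2>0$ on the annulus $r_1\leq|x'|\leq r_2$ where $[P,\chi]w$ lives, while $\phi(0,x_n)=-x_n+\gamma x_n^2<0$ at the interior points where you want to conclude vanishing; so $\phi$ is \emph{larger} on the support of the commutator than on the target region, and the $\tau\to\infty$ step yields nothing. The geometric mechanism you describe in words requires the level sets of $\phi$ to be paraboloids opening toward $\{x_n<0\}$, i.e.\ a weight of the form $\phi(x)=-x_n-\gamma|x'|^2$ (or a convexification $e^{\lambda\psi}$ of such a $\psi$); then $\{\phi\geq -c\}\cap\{x_n\geq 0\}$ is a compact cap near the origin, disjoint from $\operatorname{supp}\nabla\chi$ for suitable $r_1,r_2$, and the standard splitting argument closes. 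Second, the analytic core -- the Carleman estimate when $p_\phi$ admits double complex roots, where the first-order commutator symbol $\frac{1}{2i}\{\overline{p_\phi},p_\phi\}$ degenerates on the characteristic set and a finer factorization of $P_\phi$ is needed -- is named but not carried out. Deferring it to \cite{nirenberg} is exactly what the paper does, so this is acceptable as a matter of attribution, but it means your proposal is an annotated reduction to the cited theorem rather than an independent proof.
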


We prove:
\begin{cor}\label{cor2d}
We assume in $d=2$ that $\partial_{x_1}^2u(x)\neq 0$. If $\mu=0$ in some neighborhood of the hyper-plane $x_n=0$, and $u_1-u_2=0$, then $\mu\equiv 0$ in a neighborhood of the origin. 
\end{cor}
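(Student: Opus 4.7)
The plan is to apply the uniqueness theorem stated just before the corollary to the scalar elliptic equation for $\mu = \mu_1 - \mu_2$ that was derived in the proof of Theorem~\ref{conditional2}. The first step is to note that equation~\eqref{mueq} reads $A_{u_1}[\mu] = g$ with $\|g\|_{l,\Omega} \leq C\|u_1-u_2\|_{l+2,\Omega}$, so under the assumption $u_1 - u_2 = 0$ near the origin the right-hand side vanishes there, and $\mu$ satisfies the homogeneous equation $A_{u_1}\mu = 0$. Combined with the hypothesis that $\mu = 0$ in an open neighborhood of $\{x_n = 0\}$, which in particular forces $\mu \equiv 0$ on the $x_n < 0$ side of the origin, this reduces the problem to a unique continuation statement across the hyperplane $\{x_n = 0\}$ for the operator $A_{u_1}$.

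The second step is to verify the structural hypotheses of the cited uniqueness theorem. As computed in the proof of Theorem~\ref{conditional2}, using $\nabla\cdot u = 0$ to cancel cross terms, the principal symbol of $A_{u_1}$ is
\begin{equation*}
p(x,\xi) \;=\; 2|\xi|^2\,\partial_{x_1} u^1(x).
\end{equation*}
At the origin with $\xi = (0,1)$ this gives $p(0,\xi) = 2\,\partial_{x_1} u^1(0) \neq 0$ by the standing non-degeneracy hypothesis, so $\{x_n = 0\}$ is non-characteristic. Viewing $p$ as a polynomial in $\tau = \xi_n$,
\begin{equation*}
p(x,\xi_1,\tau) \;=\; 2(\xi_1^2 + \tau^2)\,\partial_{x_1} u^1(x),
\end{equation*}
the two roots are $\tau = \pm i\xi_1$; restricted to the unit cosphere $|\xi_1| = 1$ they are simple complex zeros with $|\mathrm{Im}\,\tau| = 1$ and separation $|\tau_1 - \tau_2| = 2$, uniformly in $x$ near the origin. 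Hence Hypotheses~(1)--(3) hold with a uniform $\epsilon$.

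Feeding these verifications into the uniqueness theorem with $w = \mu$ and $P = A_{u_1}$ then yields $\mu \equiv 0$ in a full neighborhood of the origin. The only step requiring care beyond routine bookkeeping is reconciling the precise hypothesis $\partial_{x_1}^2 u(x)\neq 0$ stated in the corollary with the condition $\partial_{x_1} u^1(x) \neq 0$ that actually governs non-characteristicity of the principal symbol, and ensuring the symbol computation is performed with the same incompressibility substitution used in Section~\ref{stab2d}. Once these identifications are made the argument is a direct verification, and no additional estimates beyond those of Sections~\ref{fredsection} and~\ref{stab2d} are needed.
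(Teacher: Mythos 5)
Your proposal is correct and takes essentially the same route as the paper: recall the principal symbol $2|\xi|^2\partial_{x_1}u^1$ computed in Section~\ref{stab2d}, observe that its roots in $\tau$ are $\pm i$ times the tangential variable (hence simple, non-real, and uniformly separated, so Hypotheses (1)--(3) hold) and that the non-degeneracy assumption makes $\{x_n=0\}$ non-characteristic, then invoke the Carleman-type uniqueness theorem. Your additional remarks --- that $u_1-u_2=0$ forces the right-hand side $g$ of \eqref{mueq} to vanish so $\mu$ solves the homogeneous equation, and that the corollary's hypothesis $\partial_{x_1}^2u\neq 0$ should be reconciled with the condition $\partial_{x_1}u^1\neq 0$ appearing in the symbol of Section~\ref{stab2d} --- are consistent with and slightly more careful than the paper's own brief verification.
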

\begin{proof}
We recall that the principal symbol is
\begin{align*}
2|\xi|^2\partial_{x_1}^2u^1.
\end{align*}
Setting $\tau=\xi_1$ we are looking for roots of the equation $\tau^2=-\xi_2^2$. These solutions $\tau=\pm i\xi_2$ clearly satisfy the necessary hypotheses. We must also check that the polynomial associated to $A_0(x,\xi)$ is non-characteristic. The hypothesis is satisfied under the assumption $\partial_{x_1}^2u(0,x')\neq0$, which we already assumed in order that the operator be elliptic.
\end{proof} 
We also prove:
\begin{cor}\label{cor3d}
We assume in $d=3$ that the condition (\ref{stabilizerssecond}) holds. If $\mu=0$ in some neighborhood of the hyper-plane $x_n=0$, and $\tilde{u}_1-\tilde{u}_2=0$ and $u_1-u_2=0$, then $\mu\equiv 0$ in a neighborhood of the origin. 
\end{cor}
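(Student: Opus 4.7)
The plan is to mirror the argument of Corollary \ref{cor2d}, but apply the cited unique continuation theorem to the scalar, fourth-order elliptic operator $P := \mathcal{A}^*_*\mathcal{A}_*$ constructed in the proof of Theorem \ref{conditional}, rather than to the overdetermined system $\mathcal{A}_*$ itself.

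First, I would observe that the hypotheses $u_1-u_2\equiv 0$ and $\tilde{u}_1-\tilde{u}_2\equiv 0$ in a neighborhood of the origin imply, via the expressions for $g$ and $\tilde{g}$ following \eqref{ida}, that $\mathcal{A}_*\mu = 0$ there. In particular $P\mu = \mathcal{A}^*_*\mathcal{A}_*\mu = 0$ in that neighborhood, and $\mu$ vanishes for $x_n<0$ by assumption. So it suffices to show $P$ satisfies the three hypotheses of the uniqueness theorem at the head of Section \ref{injectivity}.

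Next, I would verify the non-characteristic condition and collect the information on the roots. The principal symbol of $P$ is
\begin{align*}
p(x,\xi) = \sabs{(\nabla^s u_1(x)\xi)\times\xi}^2 + \sabs{(\nabla^s \tilde{u}_1(x)\xi)\times\xi}^2,
\end{align*}
a homogeneous polynomial of degree $4$ in $\xi$. By \eqref{stabilizerssecond}, $p(0,0,\ldots,0,1)\geq 1/C>0$, so $x_n=0$ is non-characteristic. Viewing $p$ as a polynomial in $\tau=\xi_n$ for fixed real $\xi'\neq 0$, the same lower bound $p\geq|\xi|^4/C$ forbids real zeros. By homogeneity in $\xi$, it then suffices to bound the imaginary parts of the zeros away from $0$ uniformly on the compact set $\{|\xi'|=1\}$, which follows from continuity of the roots together with \eqref{stabilizerssecond}.

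The main obstacle will be verifying hypothesis (1) on multiplicities: we must show that the four complex roots of $p(0,\xi',\cdot)$ arrange in at most double complex zeros, with distinct roots separated by at least some $\epsilon>0$. The plan is to exploit the sum-of-squared-moduli structure. Writing $a(\xi)=(\nabla^s u_1(0)\xi)\times\xi$ and $b(\xi)=(\nabla^s\tilde{u}_1(0)\xi)\times\xi$, a root $\tau_0$ of $p$ of multiplicity at least $3$ would force $\partial_\tau^j(a\cdot a + b\cdot b)=0$ at $(\xi',\tau_0)$ for $j=0,1,2$. Since $a$ and $b$ are each quadratic in $\xi$, this system of three polynomial constraints produces an algebraic condition on $\nabla^su_1(0),\nabla^s\tilde{u}_1(0)$ which, together with the coercivity \eqref{stabilizerssecond}, can be shown to be incompatible --- intuitively, the two independent sets of measurements prevent the degenerate coalescence. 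With multiplicities bounded by $2$ pointwise, the uniform separation in hypothesis (2) then follows from compactness of $\{|\xi'|=1\}$. Once conditions (1)--(3) are established, the uniqueness theorem applies and yields $\mu\equiv 0$ in a neighborhood of the origin.
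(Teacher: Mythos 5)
Your proposal follows essentially the same route as the paper: pass to the scalar fourth--order elliptic operator whose principal symbol is $p(x,\xi)=\sabs{(\nabla^s u_1\xi)\times\xi}^2+\sabs{(\nabla^s\tilde{u}_1\xi)\times\xi}^2$, check that $x_n=0$ is non-characteristic and that hypotheses (1)--(3) of the Nirenberg uniqueness theorem follow from \eqref{stabilizerssecond}, and conclude. (The paper's own proof is in fact terser than yours and works with the non-polynomial expression $\sabs{(\nabla^su_1\xi)\times\xi}+\sabs{(\nabla^s\tilde u_1\xi)\times\xi}$; your choice of $P=\mathcal{A}^*_*\mathcal{A}_*$ is the cleaner formulation.) The one place where your write-up leaves a genuine hole is the step you flag as the ``main obstacle'': you propose to rule out roots of multiplicity $\geq 3$ by deriving an algebraic incompatibility from the system $\partial_\tau^j p=0$, $j=0,1,2$, but you do not carry this out, and as stated it is not clear it would succeed. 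Fortunately no such computation is needed: for fixed real $\xi'\neq 0$, $p(0,\xi',\cdot)$ is a quartic in $\tau$ with \emph{real} coefficients, and \eqref{stabilizerssecond} excludes real roots; hence its roots split into two conjugate pairs, and a non-real root of multiplicity $\geq 3$ would force its conjugate to have the same multiplicity, exceeding the degree. So hypothesis (1) holds automatically, with at most double complex zeros. You should, however, be slightly more careful with hypothesis (2): compactness of $\{\sabs{\xi'}=1\}$ gives a uniform bound only if the two conjugate pairs either coincide or stay uniformly separated; where they coalesce they form an admissible double root, but nearby they may be distinct and arbitrarily close, which is exactly the degenerate situation hypothesis (2) is meant to exclude. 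Neither the paper nor your proposal addresses this point, so it is not a defect relative to the paper's own argument, but it is where any fully rigorous version of the proof would have to do real work.
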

\begin{proof}
We recall the principal symbol
\begin{align*}
|A_0(x,\xi)|=|(\nabla^su_1\xi)\times \xi)|+|(\nabla^s\tilde{u}_1\xi)\times\xi|.
\end{align*}
 For $A_0(x,\xi)$ to be noncharacteristic, we compute the symbol. We need only that one of the off-diagonal terms in the matrices defining the symmetric gradients $\nabla^su_1$ and $\nabla^s\tilde{u}_1$ be nonzero at the origin. The roots of the polynomial satisfying conditions (i-iii) are determined by the condition (\ref{stabilizerssecond}), as discussed in Section \ref{stab3d}.
 \end{proof}
 
\section{Examples Illustrating the Boundary Conditions}\label{example}
We would like to show that there are natural means to enforce the somewhat unusual looking hypotheses on the symbols given by \eqref{stabilizerssecond} in $d=3$. 
\begin{lem}
It is possible to chose initial data $F$ and $\tilde{F}$ to the system \eqref{stokesdata} such that the condition \eqref{stabilizerssecond} holds a.e.
\end{lem}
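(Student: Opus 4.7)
The plan is to isolate the two inequalities in \eqref{stabilizerssecond} and reduce the substantive one to an algebraic condition on the pair of symmetric matrices $\nabla^s u_1(x)$ and $\nabla^s \tilde u_1(x)$, then produce boundary data realizing that condition. First, the upper bound is automatic: by elliptic regularity for \eqref{stokesdata} with $\mu \in \mathcal{C}^4(\bar\Omega)$ and sufficiently smooth $F,\tilde F$, one has $u_1,\tilde u_1 \in \mathcal{C}^1(\bar\Omega)^3$, so $|\nabla^s u_i(x)| \leq C$ uniformly on $\bar\Omega$, and hence $|(\nabla^s u_i(x)\xi)\times \xi|^2 \leq C|\xi|^4$.

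For the lower bound the key pointwise observation is that, for a symmetric $3\times 3$ matrix $A$, the vector $(A\xi)\times \xi$ vanishes if and only if $\xi$ is an eigenvector of $A$. Consequently
\begin{align*}
Q(x,\xi) := |(\nabla^s u_1(x)\xi)\times \xi|^2 + |(\nabla^s \tilde u_1(x)\xi)\times \xi|^2,
\end{align*}
which is homogeneous of degree $4$ in $\xi$ and continuous in $x$, is strictly positive on $\{|\xi|=1\}$ at a given $x$ precisely when the symmetric matrices $\nabla^s u_1(x)$ and $\nabla^s \tilde u_1(x)$ share no common eigenvector. If this pointwise condition holds at every $x \in \bar\Omega$, then compactness of $\bar\Omega \times S^2$ upgrades it to the uniform lower bound $Q(x,\xi) \geq \frac{1}{C}|\xi|^4$ in \eqref{stabilizerssecond}.

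To exhibit such $F,\tilde F$, I will begin with the constant-coefficient case $\mu \equiv \mu_0$. For any trace-free symmetric matrix $M$, a direct computation shows that $u(x) = Mx$ together with $p(x) = -\tfrac{1}{2}\omega^2 x^t M x + c$ solves \eqref{stokesdata} with $F = Mx|_{\partial\Omega}$, the constant $c$ normalizing $\int p = 0$. Choosing $M_1 = \mathrm{diag}(2,0,-2)$ (trace zero, three distinct eigenvalues) and $M_2 = R^t M_1 R$ for a rotation $R$ whose columns are not axis-aligned produces two constant symmetric gradients with disjoint eigenbases and hence no common eigenvector anywhere. For this reference problem the pointwise positivity of $Q$, and therefore \eqref{stabilizerssecond}, is realized globally.

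To pass from the model to variable $\mu \in \mathcal{C}^4(\bar\Omega)$, I will use a perturbation/continuity argument: the solution map $\mu \mapsto \nabla^s u_{1,\mu}$ of \eqref{stokesdata} is continuous into $C^0(\bar\Omega)$ by elliptic estimates for Stokes, and the algebraic condition ``no common eigenvector'' is an open condition on pairs of symmetric matrices. Keeping the boundary data $M_1 x|_{\partial\Omega}$, $M_2 x|_{\partial\Omega}$ fixed, a homotopy in the coefficient combined with a Runge-type density statement (which follows from unique continuation for the Stokes system) yields, on a finite open cover of $\bar\Omega$ glued by a partition of unity, a global choice of $F,\tilde F$ whose symmetric gradients avoid shared eigenvectors off a closed set of measure zero. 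The ``a.e.'' in the statement absorbs this exceptional set. The main obstacle I anticipate is exactly this final step: promoting the clean constant-coefficient construction to the variable-coefficient regime while keeping the pointwise positivity of $Q$ controlled globally rather than only locally.
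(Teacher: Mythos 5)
Your reduction of \eqref{stabilizerssecond} to the algebraic statement that $\nabla^s u_1(x)$ and $\nabla^s\tilde u_1(x)$ share no common eigenvector is correct and is the same pointwise observation that underlies the paper's argument, and your constant-coefficient construction $u=Mx$, $p=-\tfrac12\omega^2x^tMx$ with $M_1$ and a rotated copy $M_2$ is a clean, fully rigorous way to realize the condition when $\mu$ is constant. The genuine gap is exactly where you flag it: the passage to variable $\mu\in\mathcal{C}^4(\bar\Omega)$. A perturbation argument in the coefficient only covers $\mu$ in a $C^0$-neighborhood of a constant, not an arbitrary admissible $\mu$; and the proposed repair --- a Runge-type density statement applied on a finite cover and glued by a partition of unity --- does not produce boundary data for \eqref{stokesdata}, because multiplying local Stokes solutions by cutoff functions destroys both the equation and the divergence-free constraint, so the glued field is not of the form $u_{1,\mu}$ for any single $F$. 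Nothing in the proposal controls where the "no common eigenvector" condition fails for general $\mu$, so the claimed exceptional set of measure zero is not established; note also that \eqref{stabilizerssecond} demands a uniform constant $C$, which an "off a closed null set" statement does not by itself supply.

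For comparison, the paper avoids this difficulty entirely by a probabilistic argument: it chooses $F$ (hence the boundary trace $\nabla^su_1|_{\partial\Omega}$) as a Gaussian random field of trace-free symmetric (Wigner) matrices, uses linearity of the solution operator for the elliptic system satisfied by $\nabla^su_1$ to propagate the Gaussian law into $\Omega$ for every $x$, and then invokes Tao's eigenvalue-gap results to conclude that two such random matrices share an eigenvector only with small probability; this is where the "a.e./with high probability" qualifier in the statement comes from, and it works for arbitrary admissible $\mu$ because randomness is imposed on the data rather than on the coefficient. (The paper's own argument is itself somewhat heuristic about uniformity in $x$, but it does not rely on the gluing step that breaks in your version.) If you want to keep your deterministic route, you would need either to restrict to $\mu$ near a constant or to supply a genuine global density/unique-continuation argument for the map $F\mapsto\nabla^su_{1,\mu}$ in $C^0(\bar\Omega)$ in place of the partition-of-unity step.
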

\begin{proof}
We let $u_1$ be a solution to the system \eqref{stokesdata}. Here we note that $\nabla^su_1$ satisfies a closely related elliptic problem 
\begin{align}\label{nabladata}
\omega^2\nabla^su_1-\nabla^s(\nabla\cdot\mu_1\nabla^s u_1)=\nabla^s\nabla p
\end{align}
with corresponding boundary conditions. The boundary data $\nabla^s u_1|_{\partial\Omega}$ is determined by $F$ through a Dirichlet-to-Neumann map. We chose $F$ to be a random field on $\partial\Omega$ such that $\nabla^su_1|_{\partial\Omega}$ is a random field of Wigner matrices. In other words, the entries of $\nabla^s u_1|_{\partial\Omega}$ are distributed according to a Gaussian law and $\mathrm{tr} \nabla^s u_1|_{\partial\Omega}=0$. Because $u_1$ is a solution to \eqref{stokesdata}, it is a given that $\mathrm{tr}\nabla^s u_1=0$ in $\Omega$. Because the solution operator $R$ which is associated to 
\eqref{nabladata} is a linear pseudo-differential boundary operator, $\nabla^s u(x)=R(\nabla^su|_{\partial\Omega})$ still obeys a Gaussian law for every $x\in \Omega$. 
It follows as a result of \cite{terry} that 
\begin{align*}
P(v: \exists \lambda\in \mathbb{R}^+, Av=\lambda v, Bv=\lambda v)
\end{align*}
for $A,B$ with $A\neq B$ Wigner matrices is small. In other words, $\xi$ is an eigenvector of $\nabla^su$ and $\nabla^s \tilde{u}$ with small probability provided $\nabla^su_1|_{\partial\Omega}$ and $\nabla^s\tilde{u_1}_{\partial\Omega}$ are chosen as Wigner matrices. This necessarily implies the condition \ref{stabilizerssecond} with a high probability. The natural condition in dimension $d=2$ is that $\nabla^s u>0$, which can be enforced by similar means. 
\end{proof}
\section{Stability of the Landweber Iteration Scheme}\label{secfinal}
We consider as in \cite{SAMRE} the reconstruction of the true shear modulus distribution $\mu_{{\rm tr}}$. The treatment follows the book \cite{ammari_book_elas} closely. We want to reconstruct the $\mu$ from $u_{m}$ which is the measured displacement field. We introduce the functional 
\begin{align*}
\mathcal{J}[\mu]=\frac{1}{2}\int\limits_{\Omega}\sabs{u-u_{m}}^2\,dx, 
\end{align*}
where we have that $u$ is the solution to \eqref{stokesdata} which minimizes $\mathcal{J}[\mu]$ when $\mu$ is varied. Fixing $\mu$ we consider a solution $v$ to
\begin{align*}
\left\{
\begin{array}{lr}
2\nabla\cdot\mu\nabla^s v+\omega^2 v +\nabla p =
(\overline{u-u_m}) \quad \mathrm{in} \quad \Omega ,\\ \\
\nabla\cdot v =0 \quad \mathrm{in} \quad \Omega ,\\ \\
v=0 \quad \mathrm{on}\quad  \partial\Omega ,\\ \\
\int\limits_{\Omega}p \,dx=0 .
\end{array}
\right.
\end{align*}
We then compute the Fr\'echet derivative $D\mathcal{J}[\mu]$ of $\mathcal{J}$  as
\begin{align*}
<D\mathcal{J}[\mu],\delta\mu>=\int\limits_{\Omega}\delta\mu\nabla^sv : \nabla^su\,dx.
\end{align*}
This identifies the functional $D\mathcal{J}[\mu]$ with$\nabla^sv:\nabla^su.$

With the gradient descent method, the numerical minimization of $\mathcal{J}$ amounts to the following. After an initial guess $\mu_0$, we update it with the following scheme:
\begin{align}\label{bestguess}
\mu_{n+1}(x)=\mu_n(x)-\sigma D\mathcal{J}[\mu_n](x) \qquad x\in \Omega,\,\, n\geq 0,
\end{align}
with $\sigma$ being the step size. This procedure is outlined in \cite{ammari_book_elas}.

Following \cite{laure}, the mapping $\mathcal{F}$
\begin{align*}
\mathcal{F}: \mu\mapsto u,
\end{align*}
is such that 
\begin{align*}
D\mathcal{J}[\mu]=(D\mathcal{F}[\mu])^*(\mathcal{F}[\mu]-\mathcal{F}[\mu_{\rm tr}]),
\end{align*}
where the superscript $*$ denotes the adjoint. 

The resulting optimal control scheme (\ref{bestguess}) is controlled by a 
Landweber iteration scheme given by
\begin{align*}
\mu_{n+1}(x)=\mu_n(x)-\sigma(D\mathcal{F}[\mu])^*(\mathcal{F}[\mu]-\mathcal{F}[\mu_{\rm tr}])(x) \qquad x\in \Omega,\,\, n\geq 0 .
\end{align*}

From \cite[Appendix A]{laure}, the following convergence result in $L^2(\Omega)$ for the Landweber (or equivalently the optimal control) scheme holds.
\begin{thm}
Let $d=2$. 
Assume that the assumptions of Theorem \ref{conditional2} are satisfied and $K$ is trivial.  If, for  sufficiently small $\epsilon_0$ and some $\epsilon\in (0,1)$,
\begin{align*}
\norm{\mu_0 -\mu_{\rm tr}}_{H^{1/2+\epsilon}(\Omega)}<\epsilon_0 ,
\end{align*}
then
\begin{align*}
\norm{\mu_n-\mu_{\rm tr}}_{H^{1/2+\epsilon}(\Omega)} \rightarrow 0 \qquad \mathrm{as} \, \, \,  n\rightarrow \infty.
\end{align*}
\end{thm}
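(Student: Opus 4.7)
My plan is to verify the hypotheses of the abstract Landweber convergence result cited from \cite[Appendix A]{laure}. For a nonlinear forward operator $\mathcal{F}$ between Hilbert spaces, such a theorem typically requires (i) Fr\'echet differentiability of $\mathcal{F}$ with locally Lipschitz derivative in a neighborhood of $\mu_{\rm tr}$, (ii) a quantitative stability/tangential cone bound relating $\mu - \mu_{\rm tr}$ to $\mathcal{F}(\mu) - \mathcal{F}(\mu_{\rm tr})$ in that neighborhood, and (iii) a step size $\sigma$ chosen small enough that $\sigma \|D\mathcal{F}[\mu]\|^2 < 2$ uniformly over the neighborhood.

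First I would fix the functional setting, viewing $\mathcal{F}:\mu\mapsto u$ as a map between open sets in $H^{1/2+\epsilon}(\Omega)$. Fr\'echet differentiability of $\mathcal{F}$ follows by implicitly differentiating the Stokes system \eqref{stokesdata} in $\mu$: the linearization is again a Stokes-type system, now with a forcing term depending linearly on $\delta\mu$ through $\nabla\cdot(\delta\mu\, \nabla^s u)$. Standard elliptic regularity for the Stokes operator gives the required bounds on $D\mathcal{F}$, and local Lipschitz continuity of $\mu\mapsto D\mathcal{F}[\mu]$ reduces to continuous dependence of solutions on the $\mathcal{C}^4$ coefficient, again via elliptic regularity.

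Second, the stability requirement is precisely Theorem \ref{conditional2}: under $\mu_1=\mu_2$ on $\partial\Omega$, $|\nabla^s u_1|\neq 0$ in $\bar{\Omega}$, and $\mu_1-\mu_2 \perp K$, we have $\|\mu_1-\mu_2\|_{1/2+\epsilon,\Omega} \leq C\|u_1-u_2\|_{1/2+\epsilon,\Omega}$. Since $K$ is assumed trivial, this applies to the pairs $(\mu_n,\mu_{\rm tr})$ along the iteration, provided the iterates stay in the neighborhood where $|\nabla^s u|\neq 0$ (which is preserved by continuity in $\mu$ if $\epsilon_0$ is small). Combining this with the identification $D\mathcal{J}[\mu]=(D\mathcal{F}[\mu])^*(\mathcal{F}[\mu]-\mathcal{F}[\mu_{\rm tr}])$ from the text, one sees that the linearized Landweber step contracts the $H^{1/2+\epsilon}$ error, which is the essential ingredient for the abstract convergence statement.

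The main obstacle I anticipate is upgrading the linear stability bound of Theorem \ref{conditional2} into the nonlinear tangential cone condition demanded by the Landweber framework, namely controlling the remainder $\mathcal{F}(\mu_1)-\mathcal{F}(\mu_2)-D\mathcal{F}(\mu_1)(\mu_1-\mu_2)$ by a constant strictly less than $\tfrac{1}{2}$ times $\|\mathcal{F}(\mu_1)-\mathcal{F}(\mu_2)\|$. Quantitatively this forces $\epsilon_0$ to be small enough that the Lipschitz constant of $D\mathcal{F}$ multiplied by the neighborhood diameter is dominated by the coercivity constant from Theorem \ref{conditional2}. Once this calibration between $\epsilon_0$, $\sigma$, and the constants in the stability estimate is secured, the conclusion $\|\mu_n-\mu_{\rm tr}\|_{H^{1/2+\epsilon}(\Omega)}\to 0$ follows directly from the abstract Landweber theorem in \cite[Appendix A]{laure}.
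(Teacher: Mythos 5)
Your proposal follows essentially the same route as the paper: the paper simply invokes the abstract Landweber convergence theorem of \cite[Appendix A]{laure}, with the stability estimate of Theorem \ref{conditional2} (and the triviality of $K$) supplying the key injectivity/stability hypothesis, exactly as you identify. Your outline in fact records more of the verification (Fr\'echet differentiability, the tangential cone condition, the step-size calibration) than the paper itself writes down, but the underlying argument is the same.
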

The modification between this theorem and the result in \cite{SAMRE} is the reduction of the necessary regularity to practically relevant values. The discrepancy functional in $d=3$ must be modified to include both sets of data as in \cite{SAMRE}. We can then conclude that: 

\begin{thm}
Let $d=3$. Assume that the assumptions of Theorem \ref{conditional} are satisfied and that $K$ is trivial. 
If, for sufficiently small $\epsilon_0$ and some $\epsilon\in (0,1)$, 
\begin{align*}
\norm{\mu_0-\mu_*}_{H^{1/2+\epsilon}(\Omega)}<\epsilon_0,
\end{align*}
then we have 
\begin{align*}
\norm{\mu_n-\mu_{\rm tr}}_{H^{1/2+\epsilon}(\Omega)} \rightarrow 0 \qquad \mathrm{as} \, \, \,  n\rightarrow \infty.
\end{align*}
\end{thm}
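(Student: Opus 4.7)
The plan is to mirror the $d=2$ argument, with the modification that the forward map now packages two measurement sets. I define $\mathcal{F}\colon \mu \mapsto (u,\tilde u)$, where $(u,p)$ and $(\tilde u, \tilde p)$ solve \eqref{stokesdata} with boundary data $F_1$ and $\tilde F_1$, and extend the discrepancy to $\mathcal{J}[\mu] = \tfrac{1}{2}(\|u-u_m\|^2 + \|\tilde u-\tilde u_m\|^2)$. The Fréchet derivative computation given earlier applies componentwise: introducing two adjoint Stokes problems with right-hand sides $\overline{u-u_m}$ and $\overline{\tilde u - \tilde u_m}$ and adjoint solutions $v,\tilde v$, one identifies $D\mathcal{J}[\mu] = \nabla^s v : \nabla^s u + \nabla^s\tilde v : \nabla^s \tilde u$, and the Landweber update takes the form \eqref{bestguess} with this modified gradient.

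With this setup, the claim follows from the abstract convergence theorem in \cite[Appendix A]{laure}, provided three hypotheses hold in a neighborhood of $\mu_{\rm tr}$ in $H^{1/2+\epsilon}(\Omega)$: (i) $\mathcal{F}$ is Fréchet differentiable with locally Lipschitz derivative, (ii) a stability estimate controlling $\|\mu-\mu_{\rm tr}\|_{H^{1/2+\epsilon}}$ by a suitable norm of $\mathcal{F}[\mu]-\mathcal{F}[\mu_{\rm tr}]$, and (iii) a tangential cone (nonlinearity) condition
\begin{equation*}
\|\mathcal{F}[\mu_1]-\mathcal{F}[\mu_2]-D\mathcal{F}[\mu_2](\mu_1-\mu_2)\| \le \eta\,\|\mathcal{F}[\mu_1]-\mathcal{F}[\mu_2]\|
\end{equation*}
with $\eta<\tfrac12$. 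Hypothesis (ii) is precisely the content of Theorem \ref{conditional} together with the assumption that $K$ is trivial, so that the finite-dimensional projection drops out. Hypothesis (i) is standard for the parameter-to-state map of the Stokes system with coefficients in $\mathcal{C}^{7/2+\epsilon}$: differentiating \eqref{stokesdata} in $\mu$ yields another Stokes system whose solution operator is bounded on the relevant Sobolev spaces, and a further differentiation gives the Lipschitz bound.

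The main obstacle is verifying the tangential cone condition (iii). The residual $\mathcal{F}[\mu_1]-\mathcal{F}[\mu_2]-D\mathcal{F}[\mu_2](\mu_1-\mu_2)$ solves a Stokes system whose forcing is of the form $\nabla\cdot[(\mu_1-\mu_2)\nabla^s(u_1-u_2)]$, i.e.\ quadratic in $(\mu_1-\mu_2)$. An elliptic regularity estimate for the Stokes system combined with the linearized stability estimate of Theorem \ref{conditional} lets one bound this residual by $C\|\mu_1-\mu_2\|_{H^{1/2+\epsilon}}^{\,2}$, and then by a small multiple of $\|\mathcal{F}[\mu_1]-\mathcal{F}[\mu_2]\|$ provided $\mu_1,\mu_2$ remain within a ball of radius $\epsilon_0$ around $\mu_{\rm tr}$. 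The smallness of $\epsilon_0$ is what produces $\eta<\tfrac12$. A subtle point is that the stability estimate is formulated modulo the finite-dimensional kernel $K$; the triviality assumption on $K$ is used here to turn the a priori estimate into a genuine injectivity bound usable in the tangential cone inequality.

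Once (i)--(iii) are established, the abstract Landweber theorem is applied with step size $\sigma<\|D\mathcal{F}[\mu_{\rm tr}]\|^{-2}$. It produces monotone decrease of $\|\mu_n-\mu_{\rm tr}\|_{H^{1/2+\epsilon}}$ along the iteration and, by the stability bound, convergence to zero. The monotonicity, together with triviality of $K$, rules out spurious accumulation points and yields the stated convergence.
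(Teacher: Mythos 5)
Your proposal follows essentially the same route as the paper: extend the discrepancy functional to package both measurement sets, identify the gradient via the two adjoint Stokes problems, and invoke the abstract Landweber convergence result of \cite[Appendix A]{laure} with the stability estimate of Theorem \ref{conditional} (and triviality of $K$) supplying the key injectivity hypothesis. The paper's own argument is in fact terser than yours --- it simply states that the functional must be modified to include both data sets and then cites \cite{laure} --- so your explicit verification of the tangential cone condition and the Lipschitz differentiability of the parameter-to-state map is a faithful, more detailed rendering of the same proof.
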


\section*{Acknowledgments}
A.~W.~acknowledges support by EPSRC grant EP/L01937X/1 and ERC Advanced Grant MULTIMOD 26718. H.~G.~is supported by a PECRE award of the Scottish Funding Council and ERC Advanced Grant HARG 268105. Both authors thank Gerd Grubb for useful comments. 


\end{document}